\documentclass[reqno]{amsart}
\usepackage{hyperref}
\usepackage{amssymb}

\allowdisplaybreaks

\theoremstyle{plain}
\newtheorem{theorem}{Theorem}[section]

\theoremstyle{definition}
\newtheorem{definition}[theorem]{Definition}

\theoremstyle{remark}
\newtheorem{remark}[theorem]{Remark}
\numberwithin{equation}{section}

\begin{document}

\title[Semilinear heat equation with memory boundary condition]
{Global existence of solutions of
semilinear heat equation with nonlinear memory condition}

\author[A. Gladkov]{Alexander Gladkov}
\address{Alexander Gladkov \\ Department of Mechanics and Mathematics
\\ Belarusian State University \\  4  Nezavisimosti Avenue \\ 220030
Minsk, Belarus  and  Peoples' Friendship University of Russia (RUDN University) \\  6 Miklukho-Maklaya street \\  117198 Moscow,  Russian Federation}    \email{gladkoval@mail.ru }

\author[M. Guedda]{Mohammed Guedda}
\address{Mohammed Guedda \\ Universit\'{e} de Picardie,
LAMFA, CNRS, UMR 6140, 33 rue Saint-Leu, F-80039,  Amiens, France}
\email{mohamed.guedda@u-picardie.fr}

\subjclass{35K20, 35K58, 35K61}
\keywords{Semilinear parabolic equation, memory boundary condition, finite time blow-up}

\begin{abstract}
We consider a semilinear parabolic equation with flux at the
boundary governed by a nonlinear memory. We give some conditions
for this problem which guarantee global existence of solutions as
well as blow up in finite time of all nontrivial solutions. The
results depend on the behavior of variable coefficients as $t \to
\infty.$
\end{abstract}

\maketitle

\section{Introduction}

We investigate the global solvability and blow-up in finite time for a 
semilinear heat equation with a nonlinear memory boundary condition:
\begin{equation}\label{1e}
u_{t} = \Delta u + c(t) u^p  \,\,\,  \textrm{for} \,\,\,
x \in \Omega, \,\,\, t>0,
\end{equation}
\begin{equation}\label{1b}
\frac{\partial u(x,t)}{\partial\nu} = k(t) \int_0^t u^q (x,\tau)
\,d\tau \,\,\,  \textrm{for} \,\,\, x  \in\partial\Omega, \,\,\,  t > 0,
\end{equation}
\begin{equation}\label{1i}
u(x,0)= u_0(x)  \,\,\,  \textrm{for}  \,\,\,  x  \in \Omega,
\end{equation}
where $\Omega$ is a bounded domain in $\mathbb{R}^n$ for $n \geq
1$ with smooth boundary $\partial  \Omega,$ $\nu$ is unit
outward normal on $\partial\Omega,$  $p >0$ and
$q>0.$ Here $c(t)$  and $k(t)$ are nonnegative continuous functions for $ t \geq  0.$
The initial datum $ u_0(x)$ is a nonnegative $C^1(\overline\Omega)$ function which satisfies the boundary condition at $t = 0.$

%%%%%%%%%%%%%%%%%%%%%%%%%%%%%%%
In the literature for parabolic equations, memory terms in the boundary flux appear in many references. For example, in \cite{C} a memory term (\ref{1b}) with $k(t) \equiv 1, \, q=1$ is introduced for the study of Newtonian radiation and calorimetry. A linear memory boundary condition takes into account the hereditary effects on the boundary as those studied in \cite{FM1}, \cite{GP}. In the paper \cite{FM2} similar hereditary boundary conditions have been employed in models of time-dependent electromagnetic fields at dissipative boundaries.
A nonlinear memory boundary condition arises  in a model of capillary growth in solid tumors as initiated by angiogenic growth factors, for example (see \cite{LPSN-H}).

Global existence and blow-up in finite time of solutions
for variety parabolic problems with memory boundary conditions
have been studied in many papers (see, for example, \cite{A,AD,ADD,ADW,DD,DW1,DW2,LQF,WCH} and the references therein).

%%%%%%%%%%%%%%%%%%%%%%%%%%%%%%%%%%%
Let $Q_T=\Omega\times(0,T),\;S_T =\partial\Omega\times(0,T),$ $\Gamma_T=S_T\cup\overline\Omega\times\{0\}$, $T>0.$
\begin{definition}\label{Def1}
We say that a nonnegative function
$u \in C^{2,1}(Q_T ) \cap C^{1,0}({Q}_T\cup \Gamma_T)$ is a subsolution of
problem (\ref{1e})--(\ref{1i})  in $Q_T$ if
\begin{eqnarray}\label{E:2.0}
\left\{ \begin{array}{ll} u_{t} \leq \Delta u + c(t) u^p  \,\,\,  \textrm{for} \,\,\,
(x,t) \in Q_T, \\
\frac{\partial u(x,t)}{\partial\nu} \leq  k(t) \int_0^t u^q (x,\tau)
\,d\tau \,\,\,  \textrm{for} \,\,\, (x,t) \in S_T, \\
u(x,0)\leq u_0(x) \,\,\, \textrm{for} \,\,\, x \in \Omega,
\end{array} \right.
\end{eqnarray}
and $u \in C^{2,1}(Q_T ) \cap C^{1,0}({Q}_T\cup \Gamma_T)$ is a supersolution
if $u \geq 0$ and it satisfies (\ref{E:2.0}) in the reverse order.
We say that $u$ is a solution of problem (\ref{1e})--(\ref{1i})  in $Q_T$
if it is both a subsolution and a supersolution of (\ref{1e})--(\ref{1i})
in $Q_T.$
\end{definition}

Local existence of solutions and comparison principle for (\ref{1e})--(\ref{1i}) may be developed using the same techniques as in \cite{ADD}, \cite{GladkovKavitova1}. We formulate comparison principle which will be used below.
\begin{theorem}\label{p:theorem:comp-prins}
     Let $u(x,t)$ and $v(x,t)$ be a supersolution and a subsolution
     of problem~(\ref{1e})--(\ref{1i}) in $Q_{T},$ respectively. Suppose that
     $u(x,t)>0$ or $v(x,t)>0$ in $Q_T\cup\Gamma_T$ if $\min(p,q)<1$.
     Then $u(x,t)\geq v(x,t)$ in $Q_T\cup\Gamma_T.$
\end{theorem}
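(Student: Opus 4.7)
The plan is to reduce the nonlinear comparison to a linear one by the mean value theorem and then apply a perturbation argument adapted to the memory boundary term. Set $w := u - v \in C^{2,1}(Q_T) \cap C^{1,0}(Q_T \cup \Gamma_T)$. Writing $u^p - v^p = p\,\xi_1^{p-1}\,w$ and $u^q - v^q = q\,\xi_2^{q-1}\,w$ for suitable intermediate values $\xi_1, \xi_2$ between $u$ and $v$, the coefficients $a(x,t) := c(t)\,p\,\xi_1^{p-1}(x,t)$ and $b(x,\tau) := q\,\xi_2^{q-1}(x,\tau)$ are nonnegative and bounded on $\overline{\Omega}\times[0,T']$ for every $T' \in (0,T)$: when $\min(p,q)\geq 1$ this is automatic, and when $\min(p,q)<1$ the assumption that $u>0$ or $v>0$ on $Q_T\cup\Gamma_T$ keeps $\xi_i$ uniformly bounded away from zero on compact subsets.

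Consequently, on $\overline{Q_{T'}}$ the difference $w$ satisfies
\begin{equation*}
w_t - \Delta w - a(x,t)\,w \geq 0 \text{ in } Q_{T'}, \quad \frac{\partial w}{\partial\nu} \geq k(t)\int_0^t b(x,\tau)\,w(x,\tau)\,d\tau \text{ on } S_{T'}, \quad w(\cdot,0)\geq 0.
\end{equation*}
To deduce $w\geq 0$, I would introduce the strictly positive perturbation $W := w + \epsilon\,\phi$ with $\phi(x,t) := e^{\lambda t}\,\psi(x)$, where $\psi \in C^2(\overline{\Omega})$ is chosen strictly positive with $\partial\psi/\partial\nu>0$ on $\partial\Omega$, and $\lambda>0$ is taken large enough that
\begin{equation*}
\phi_t - \Delta\phi - a\phi > 0 \text{ in } Q_{T'}, \qquad \frac{\partial\phi}{\partial\nu} > k(t)\int_0^t b(x,\tau)\,\phi(x,\tau)\,d\tau \text{ on } S_{T'}.
\end{equation*}
The boundary strictness is attainable because $k(t)\int_0^t b\,\phi\,d\tau \leq \|k\|_\infty\|b\|_\infty\,\psi(x)\,e^{\lambda t}/\lambda$, which is dominated by $\partial\phi/\partial\nu = e^{\lambda t}\,\partial\psi/\partial\nu$ once $\lambda$ is sufficiently large.

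Now suppose, for contradiction, that $W$ fails to be strictly positive on $\overline{Q_{T'}}$. Since $W(\cdot,0)\geq\epsilon\psi>0$, continuity yields a first time $t_0\in(0,T']$ and a point $x_0\in\overline{\Omega}$ with $W(x_0,t_0)=0$ and $W\geq 0$ on $\overline{Q_{t_0}}$. If $x_0\in\Omega$, the interior minimum forces $W_t(x_0,t_0)\leq 0$ and $\Delta W(x_0,t_0)\geq 0$, which together with $aW(x_0,t_0)=0$ contradicts the strict interior inequality $W_t-\Delta W-aW\geq\epsilon(\phi_t-\Delta\phi-a\phi)>0$. If $x_0\in\partial\Omega$, then the pointwise bound $w\geq-\epsilon\phi$ on $\overline{Q_{t_0}}$ inserted into the boundary inequality yields $\partial w/\partial\nu\geq-\epsilon\,k(t_0)\int_0^{t_0}b\phi\,d\tau$, whence $\partial W/\partial\nu(x_0,t_0)\geq\epsilon\bigl[\partial\phi/\partial\nu - k(t_0)\int_0^{t_0}b\phi\,d\tau\bigr](x_0,t_0)>0$, contradicting the sign $\partial W/\partial\nu(x_0,t_0)\leq 0$ forced by the boundary minimum. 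Thus $W>0$ on $\overline{Q_{T'}}$ for every $\epsilon>0$; letting $\epsilon\to 0^+$ and then $T'\to T^-$ yields $u\geq v$ on $Q_T\cup\Gamma_T$.

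The main obstacle is constructing a single perturbation $\phi$ that handles both the local PDE inequality and the nonlocal memory boundary inequality. The product form $e^{\lambda t}\psi(x)$, with $\partial\psi/\partial\nu>0$ and $\lambda$ large, simultaneously dominates the zeroth-order coefficient $a$ in the PDE and the memory integral on the boundary; the integral $\int_0^t b\phi\,d\tau$ picks up only an $O(1/\lambda)$ factor relative to $\phi(x,t)$, while the Hopf margin $\partial\phi/\partial\nu$ remains of full order $e^{\lambda t}$. The boundedness of $a$ and $b$, in turn, is exactly what the positivity hypothesis on $u$ or $v$ secures when $\min(p,q)<1$.
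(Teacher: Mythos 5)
The paper never actually proves Theorem~\ref{p:theorem:comp-prins}: it only remarks that the comparison principle ``may be developed using the same techniques as in \cite{ADD}, \cite{GladkovKavitova1}'', so there is no in-paper argument to compare against. Your proof is correct and is exactly the standard route those references take: mean-value linearization of $u^p-v^p$ and $u^q-v^q$, a strict positive supersolution $\epsilon e^{\lambda t}\psi(x)$ of the linearized problem with $\partial\psi/\partial\nu>0$ on $\partial\Omega$ (e.g.\ $\psi=2-\varphi$ with $\varphi$ the first Dirichlet eigenfunction, as in the proof of Theorem~\ref{Th00}), and a first-contact argument in which the memory integral only contributes an $O(1/\lambda)$ term. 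The one step I would spell out is the claim that positivity of just \emph{one} of $u,v$ keeps $\xi_i$ uniformly bounded away from zero when $\min(p,q)<1$: the intermediate point satisfies only $\xi_1\ge\min(u,v)$ a priori, and $\min(u,v)$ may vanish if, say, $u\ge\delta>0$ but $v$ touches zero. The claim is still true, but it needs the direct bound on the secant slope: for $0\le v\le u/2$ one has $(u^p-v^p)/(u-v)\le u^p/(u/2)=2u^{p-1}\le 2\delta^{p-1}$, while for $v\ge u/2$ one has $\xi_1\ge\delta/2$; either way $a=c(t)(u^p-v^p)/(u-v)$ is bounded on $\overline\Omega\times[0,T']$, and similarly for $b$. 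With that justification supplied, the argument is complete.
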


In this paper we analyze the influence of variable coefficients on
global existence and blow-up in finite time of classical solutions of problem~(\ref{1e})--(\ref{1i}). Our global existence and blow-up results depend on the behavior of the functions $c (t)$ and $k (t)$ as $t \to \infty.$

This paper is organized as follows. In the next section we show
that all nonnegative solutions are global for $\max(p,q) \leq 1$ and present
finite time blow-up of all nontrivial solutions for $\max(p,q) > 1$ as well
as the existence of bounded global solutions for small initial data for $\min(p,q) > 1.$
In section~3 we investigate the case $ p=1, q > 1.$

%%%    Finite time blow-up and global existence     %%%%%%%%%%%%%%%%%%%%%%%%%%%%%%%%%%%%%%%%%%%%%%%%%%%%%%%%%%%%%%%%%%%%%%%%%%%%%%%%%%%%%%%%%%%%%%%%%%%%%%%
\section{Finite time blow-up and global existence }\label{FT}

We begin with the global existence of solutions of  (\ref{1e})--(\ref{1i}).  The proof  relies on the continuation principle and the construction of a supersolution.
\begin{theorem}\label{Th00}
If $\max (p,q) \leq 1,$ then every solution of (\ref{1e})--(\ref{1i}) is global.
\end{theorem}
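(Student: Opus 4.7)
The plan is to establish global existence by producing, for each finite $T > 0$, a bounded supersolution of~(\ref{1e})--(\ref{1i}) on $Q_T$ that majorizes the initial datum. If $T_{\max}$ denotes the maximal time of existence, the continuation principle implicit in the standard local existence theory forces $\|u(\cdot,t)\|_\infty \to \infty$ as $t \to T_{\max}^-$; the existence of such a supersolution, combined with Theorem~\ref{p:theorem:comp-prins}, will rule this out.

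For the construction, fix once and for all a smooth spatial profile $\phi \in C^2(\overline\Omega)$ satisfying $\phi \geq 1$ in $\overline\Omega$, $\partial\phi/\partial\nu \geq 1$ on $\partial\Omega$, and $|\Delta\phi|$ bounded. Such a $\phi$ arises, for instance, as an affine modification $\phi = 1 + \kappa(\|w\|_\infty - w)$ of the torsion function $w$ (defined by $-\Delta w = 1$ in $\Omega$, $w|_{\partial\Omega}=0$) with $\kappa$ large enough, the outward-normal bound following from Hopf's lemma. For each $T > 0$, I would then take the ansatz
\[
v(x,t) = M e^{\lambda t}\phi(x),
\]
with $M, \lambda > 0$ chosen in terms of $T$, $\|u_0\|_\infty$, and the finite quantities $C_T := \sup_{[0,T]} c$ and $K_T := \sup_{[0,T]} k$.

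Substituting $v$ yields two scalar conditions on $(M,\lambda)$. The interior inequality collapses, after using $\phi \geq 1$, $M \geq 1$, $\lambda t \geq 0$ and $p \leq 1$ (so that $M^p e^{p\lambda t}\phi^p \leq M e^{\lambda t}\|\phi\|_\infty^p$), to a linear lower bound of the form $\lambda \geq K_1 + C_T K_2^p$, where $K_1, K_2$ depend only on $\phi$. The memory boundary term is the delicate part:
\[
k(t)\int_0^t v(x,\tau)^q\,d\tau \leq K_T K_2^q M^q\,\frac{e^{q\lambda t}-1}{q\lambda} \leq \frac{K_T K_2^q}{q\lambda}\,M^q e^{\lambda t},
\]
where the final inequality crucially uses $q \leq 1$. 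This must be dominated by $\partial v/\partial\nu \geq M e^{\lambda t}$, which yields the sufficient condition $q\lambda M^{1-q} \geq K_T K_2^q$, met by enlarging $\lambda$ (when $q=1$) or $M$ (when $q<1$). Taking in addition $M \geq \|u_0\|_\infty$ secures $v(x,0) \geq u_0(x)$. Since $v > 0$ throughout, the positivity hypothesis of Theorem~\ref{p:theorem:comp-prins} is automatic, and comparison yields $u \leq v$ wherever $u$ is defined; boundedness of $v$ on each $Q_T$ then forces $T_{\max} = \infty$.

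The main obstacle, and exactly where $\max(p,q) \leq 1$ is essential, is the memory boundary term: for $q > 1$ the integral $\int_0^t e^{q\lambda\tau}\,d\tau$ grows like $e^{q\lambda t}/(q\lambda)$, which strictly outpaces $e^{\lambda t}$, so no exponential ansatz could absorb it. Similarly $p > 1$ would make the contribution $M^p e^{p\lambda t}$ dominate $Me^{\lambda t}$ in the interior inequality. The blow-up statements that follow will confirm this obstruction reflects genuine finite-time blow-up rather than a defect of the ansatz.
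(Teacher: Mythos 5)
Your proof is correct and follows essentially the same route as the paper: a supersolution of the form $M e^{\lambda t}\phi(x)$ with a spatial profile pinched between positive constants and having strictly positive outward normal derivative, where $\max(p,q)\leq 1$ lets the exponential absorb both the reaction term and the memory integral before invoking Theorem~\ref{p:theorem:comp-prins} and the continuation principle. The only difference is cosmetic — the paper takes the profile $2-\varphi$ built from the first Dirichlet eigenfunction rather than your torsion-function modification.
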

\begin{proof}
We seek a positive supersolution $\overline u$ of (\ref{1e})--(\ref{1i})  in
$Q_T$ for any positive $T.$ Since $c(t)$ and $k(t)$ are
continuous functions there exists a constant $M>0$ such that
$\max ( c(t), k(,t) )\leq M$ for $t \in [0,T].$  Let $\lambda_1$ be the
first eigenvalue of the following problem
\begin{equation*}
    \begin{cases}
        \Delta\varphi+\lambda\varphi=0,\;x\in\Omega,\\
        \varphi(x)=0,\;x\in\partial\Omega
    \end{cases}
\end{equation*}
and $\varphi(x)$ be the corresponding eigenfunction with
$\sup\limits_{\Omega}\varphi(x)=1$. It is well known
$\varphi(x)>0$ in $\Omega$ and $\max\limits_{\partial\Omega}
\partial\varphi(x)/\partial\nu < 0.$ We define
\begin{equation*}
\overline u = d \exp (bt) [2 - \varphi(x)],
\end{equation*}
where
\begin{equation*}
      b\geq \max \left( \lambda_1 + 2M , 2M \max\limits_{\partial\Omega}\left(-q\frac{\partial\varphi}{\partial\nu}\right)^{-1}\right),
\;d\geq \left\{ \sup\limits_\Omega u_0(x), 1 \right\}.
\end{equation*}
Then $\overline u$ satisfies
\begin{eqnarray*}\label{}
\begin{array}{ll}
\overline u_{t} \geq \Delta \overline u + c(t) \overline u^p  \,\,\, & \textrm{for} \,\,\,
(x,t) \in Q_T,   \\
\frac{\partial \overline u (x,t)}{\partial\nu} \geq k(t) \int_0^t \overline u^q (x,\tau)
\,d\tau \,\,\, & \textrm{for} \,\,\, (x,t) \in S_T, \\
\overline u(x,0) \geq u_0(x)  \,\,\, & \textrm{for} \,\,\,  x  \in \Omega.
\end{array}
\end{eqnarray*}
Hence, $\overline u$ is the desired supersolution and by Theorem~\ref{p:theorem:comp-prins} problem
(\ref{1e})--(\ref{1i})  has a global solution for any initial datum.
\end{proof}

%%%%%%%%%%%%%%%%%%%%%%%%%%%%%%%%%%%%%%%%%%%%%%%%%%%%%%%%%%%%%%%%%%%%%%%%%%%%%%%%%%%%%%%%%%%%%%%%%%%%%%%%%%%%%%%%%%%%%%%%%%%%%%%%%

We need the following assertion which was proved in \cite{GS} for a more general case.
\begin{theorem}\label{Th0}
Let $y (a) \geq 0,\,$ $y' (a) \geq 0,\,$ $y (a) + y' (a) > 0,\,$ $q> 1,\,$ $b (r)$ be a nonnegative continuous function for $r \geq a.$
Then for $r>a$ the inequality
\begin{equation*}\label{}
y''(r) \geq b(r) y^q (r)
\end{equation*}
has no global solutions if
\begin{equation*}\label{}
\int_a^\infty r^q b(r)  \,dr = \infty
\end{equation*}
and at least one of the following conditions is fulfilled
\begin{equation*}\label{}
 b(r)  \leq  \frac{B}{r^{q+1}} \,\,\,  \textrm{for large values of} \,\,\, r, \, B > 0,
\end{equation*}
or
\begin{equation*}\label{}
 b(r)  \,\,\,  \textrm{is nonincreasing for large values of} \,\,\, r.
\end{equation*}
\end{theorem}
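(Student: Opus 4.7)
My plan is to argue by contradiction. Assume $y$ is a global solution of $y'' \geq b y^q$ on $[a,\infty)$. From $y'' \geq 0$, $y'$ is non-decreasing, so $y'(r) \geq y'(a) \geq 0$ and $y$ is non-decreasing. If $y' \equiv 0$ on $[a,\infty)$, then $y$ is a positive constant (since $y(a)+y'(a)>0$), which forces $b \equiv 0$ via $0 = y'' \geq b y^q$ and contradicts $\int_a^\infty r^q b\,dr = \infty$. Hence $y'(r_1) > 0$ for some $r_1 > a$, and by convexity $y(r) \geq y'(r_1)(r-r_1) \geq c\,r$ eventually for some $c>0$.

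Next I would show $y'(r) \to \infty$ as $r \to \infty$. Suppose instead $\lim y'(r) = L < \infty$. Then $\int_a^\infty y''(r)\,dr = L - y'(a) < \infty$, so $\int_a^\infty b(r) y^q(r)\,dr < \infty$. But with the linear lower bound $y(r) \geq c\,r$,
\[
\int_a^\infty b(r) y^q(r)\,dr \geq c^q \int_a^\infty r^q b(r)\,dr = \infty,
\]
a contradiction. So $y'(r) \to \infty$, and since $y'$ is non-decreasing this forces $y(r)/r \to \infty$.

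The final step is to invoke the auxiliary hypothesis on $b$. In the case $b(r) \leq B/r^{q+1}$ for large $r$, I would iterate the integral representation $y(r) \geq \int_a^r (r-s) b(s) y^q(s)\,ds$: substituting the lower bound $y(s) \geq Ks$ (valid on $[r_K,\infty)$ for any $K$) gives $y(r)/r \geq (K^q/2)\int_{r_K}^{r/2} s^q b(s)\,ds$, and feeding each improved lower bound back in produces estimates of the form $y(r)/r \geq C_n (\ln r)^{\alpha_n}$ with $\alpha_n \to \infty$, which — after careful tracking of the domain of validity — forces $y(r) = \infty$ at a finite $r$. In the case $b$ nonincreasing for large $r$, I would multiply $y'' \geq b y^q$ by $y'$ and integrate by parts, using $b' \leq 0$ to absorb the remaining term, obtaining the energy-type estimate $(y'(R))^2 \geq \frac{2 b(R)}{q+1} y^{q+1}(R) + O(1)$; combining this with the convexity comparison $y(2R) \geq R\,y'(R)$ and the divergence $\int r^q b = \infty$ produces a Riccati-type blow-up.

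The main obstacle is this last step in both cases: in (a), controlling the iteration so that the regions of validity close at a finite $r$; in (b), choosing the weight in the integration by parts so that the assumption $\int r^q b = \infty$ is actually used, rather than the strictly weaker $\int \sqrt{b}\,dr = \infty$ that the bare energy method naturally exploits.
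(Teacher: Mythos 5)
First, a point of reference: the paper does not prove this statement at all — Theorem~\ref{Th0} is quoted from \cite{GS}, so there is no internal proof to compare yours against. Judged on its own terms, your proposal correctly handles the preliminaries: nonnegativity, monotonicity and convexity of $y$, the dichotomy $y'\equiv 0$ versus $y(r)\ge cr$ eventually, and the exclusion of asymptotically linear global solutions via $\int_a^\infty y''\,dr<\infty$ against $\int_a^\infty r^qb(r)\,dr=\infty$. But this is the easy half. The entire content of the theorem is to rule out \emph{superlinear} global solutions under hypothesis (a) or (b), and for both cases you only sketch a strategy and then explicitly name its failure point as ``the main obstacle.'' That is a genuine gap, not a detail.

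Concretely: in case (a), the bootstrap $y(r)\ge\int(r-s)b(s)y^q(s)\,ds$ is a workable route, but as written each pass restricts to $[\,\cdot\,,r/2]$, so after $n$ iterations the bound involves $\Phi(r/2^n)$ with $\Phi(R)=\int^R s^qb\,ds$, the domains of validity recede, and the constants obey a recursion of the form $C_{n+1}\sim C_n^q/(cq^n)$ whose normalized limit $C_n^{1/q^n}$ must be shown to stay positive; the standard repair (cut by factors $\theta_n$ with $\prod\theta_n>0$ and use $b\le Br^{-(q+1)}$ to get $\Phi(\theta_ns)\ge\Phi(s)-B\ln(1/\theta_n)$) is precisely the work you have not done. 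In case (b) the proposed method cannot succeed as stated: the energy inequality $(y'(R))^2\ge\frac{2}{q+1}b(R)y^{q+1}(R)+O(1)$ (valid since $db\le 0$), followed by separation of variables, contradicts global existence only when $\int\sqrt{b}\,dr=\infty$, and this fails for admissible data such as $b(r)=r^{-(q+1)}\ln r$, which is eventually nonincreasing with $\int r^qb\,dr=\infty$ but $\int\sqrt{b}\,dr<\infty$. Your proposed remedy — combining with $y(2R)\ge Ry'(R)$ to ``produce a Riccati-type blow-up'' — is an aspiration rather than an argument, and it is exactly here that the proof in \cite{GS} does its real work. As it stands, the proposal establishes only the part of the theorem that does not use hypotheses (a) or (b).
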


Now we prove blow-up result for $\max (p,q) > 1.$
\begin{theorem}\label{Th1}
There are not nontrivial global solutions of (\ref{1e})--(\ref{1i}) if
\begin{equation}\label{1.2}
p >1 \,\,\,  \textrm{and} \,\,\,\int_0^\infty  c(t)  \,dt = \infty
\end{equation}
or
\begin{equation}\label{1.3}
q >1, \,\,\, k(t)  \geq \underline k(t) \geq 0  \,\,\,  \textrm{and} \,\,\,
\int_0^\infty t \underline k(t)  \,dt = \infty
\end{equation}
and at least one of the following conditions is fulfilled
\begin{equation}\label{1.4}
 \underline k(t)  \leq  \frac{c}{t^2} \,\,\,  \textrm{for large values of} \,\,\, t, \, c > 0,
\end{equation}
or
\begin{equation}\label{1.5}
 t^{1-q}\underline k(t)  \,\,\,  \textrm{is nonincreasing for large values of} \,\,\, t.
\end{equation}
\end{theorem}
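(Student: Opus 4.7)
The plan is to treat conditions (\ref{1.2}) and (\ref{1.3}) separately; the former is an ``interior'' blow-up driven by the reaction $c(t)u^p$, while the latter is a ``boundary'' blow-up driven by the memory flux, for which Theorem~\ref{Th0} is the natural tool.

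For case (\ref{1.2}), I would compare $u$ with a spatially constant subsolution $\underline u(x,t) = f(t)$. The strong maximum principle, applied to $u_t \geq \Delta u$ together with the nonnegative boundary flux, yields that for any fixed $t_0 > 0$ there exists $m > 0$ with $u(x,t_0) \geq m$ for every $x \in \overline{\Omega}$. Let $f$ solve the Cauchy problem $f'(t) = c(t) f(t)^p$, $f(t_0) = m$, and work on $[t_0,T]$. All three subsolution inequalities in (\ref{E:2.0}) are immediate: the PDE reduces to the ODE, and the boundary inequality $0 \leq k(t) \int_0^t f^q(\tau)\,d\tau$ is automatic since $k,f \geq 0$ (the contribution of $u$ on $[0,t_0]$ only enlarges the flux seen by $u$, which is harmless for the comparison). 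Separating variables gives
\[
\frac{1}{(p-1) m^{p-1}} - \frac{1}{(p-1) f(t)^{p-1}} = \int_{t_0}^t c(s)\, ds,
\]
and the hypotheses $p > 1$ and $\int_0^\infty c(t)\,dt = \infty$ force $f$ to blow up at some finite $T^* > t_0$. Theorem~\ref{p:theorem:comp-prins} then gives $u \geq \underline u$, so $u$ blows up.

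For case (\ref{1.3}), I would apply Theorem~\ref{Th0} with the choice $b(r) = C\, r^{1-q}\,\underline k(r)$. The hypotheses match cleanly: $r^q b(r) = C r \underline k(r)$ makes $\int r^q b(r)\,dr = \infty$ equivalent to (\ref{1.3}); the bound $b(r) \leq B/r^{q+1}$ reduces to (\ref{1.4}); and $b$ non-increasing is exactly (\ref{1.5}). Thus the task is to construct an auxiliary function $y(t)$ from $u$ satisfying the second-order inequality
\[
y''(t) \geq C\, t^{1-q}\, \underline k(t)\, y(t)^q
\]
for $t$ large, with $y$ and $y'$ positive at the starting point (again by strong maximum principle). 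Integration of (\ref{1e}) over $\Omega$ and the boundary condition (\ref{1b}) give $\tfrac{d}{dt}\int_\Omega u\,dx \geq \underline k(t)\int_0^t \int_{\partial\Omega} u^q(x,\tau)\,dS\,d\tau$, and the critical factor $t^{1-q}$ is extracted by Hölder's inequality in time,
\[
\int_0^t u^q(x,\tau)\,d\tau \geq t^{1-q}\left(\int_0^t u(x,\tau)\,d\tau\right)^q,
\]
valid since $q > 1$. Jensen's inequality on $\partial\Omega$ then supplies the $y^q$ structure. A plausible choice is $y(t) = \int_{t_0}^t \int_{t_0}^s \int_{\partial\Omega} u(x,\tau)\,dS\,d\tau\,ds$, whose second derivative equals the boundary mass $\int_{\partial\Omega} u(x,t)\,dS$ and whose positivity follows from the strong maximum principle lower bound.

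The main technical obstacle in case (\ref{1.3}) is the bookkeeping in the chain of inequalities: matching the boundary integrals produced by the flux condition with the volume or boundary integrals that define $y$, so that Hölder and Jensen compose to give precisely $C\,t^{1-q}\,\underline k(t)\,y(t)^q$ on the right-hand side rather than a mixture of interior and boundary quantities. Once this ODE inequality is in place and the strong maximum principle guarantees $y(a) + y'(a) > 0$, Theorem~\ref{Th0} closes the argument.
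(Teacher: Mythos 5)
Your treatment of case (\ref{1.2}) is sound and essentially equivalent to the paper's: the paper integrates (\ref{1e}) over $\Omega$ and applies Jensen's inequality to obtain $w'(t)\geq|\Omega|^{1-p}c(t)w^p(t)$ for $w(t)=\int_\Omega u(x,t)\,dx$, which is the same ODE blow-up mechanism as your spatially constant subsolution (modulo the minor point that the comparison must be restarted at $t=t_0$, where the memory contribution from $[0,t_0]$ only enlarges the flux and is indeed harmless).

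The gap is in case (\ref{1.3}), and it is exactly the ``bookkeeping'' obstacle you name but do not resolve. Integrating (\ref{1e}) over $\Omega$, as you propose, gives a lower bound for $\frac{d}{dt}\int_\Omega u\,dx$ in terms of $k(t)\int_0^t\int_{\partial\Omega}u^q\,dS\,d\tau$: the left-hand side is a volume quantity and the right-hand side a boundary quantity, and no combination of H\"{o}lder in time and Jensen on $\partial\Omega$ closes this into a differential inequality for a single function. The paper's resolution is to abandon the integrated PDE entirely and use the Green function representation (\ref{1.9}) for the Neumann problem together with the crucial lower bound $\int_{\partial\Omega}G(x,y;t-\tau)\,dS_x\geq c_1$ for $y\in\partial\Omega$ (property (\ref{1.82})); integrating (\ref{1.9}) over $\partial\Omega$ then bounds the boundary trace $\int_{\partial\Omega}u(\cdot,t)\,dS_x$ from below by the memory term, yielding the closed inequality (\ref{1.11}) for $f(t)=\int_0^t\int_{\partial\Omega}u\,dS_x\,d\sigma$. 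This device is absent from your sketch. A second, smaller defect: your candidate $y(t)=\int_{t_0}^t\int_{t_0}^s\int_{\partial\Omega}u\,dS\,d\tau\,ds$ has $y''(t)=\int_{\partial\Omega}u(\cdot,t)\,dS$, and the available estimate bounds this only by $c_2\int_0^t\tau^{1-q}k(\tau)f^q(\tau)\,d\tau$, not by $C\,t^{1-q}\underline k(t)\,y^q(t)$ pointwise. The paper instead takes $g(t)=c_2\int_0^t(t-\tau)\tau^{1-q}k(\tau)f^q(\tau)\,d\tau$, for which $g''(t)=c_2t^{1-q}k(t)f^q(t)\geq c_2t^{1-q}\underline k(t)g^q(t)$ follows from $f\geq g$, and only then invokes Theorem~\ref{Th0}. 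Without the Green function step and this choice of auxiliary function, the argument does not close.
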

\begin{proof}
Without loss of generality we can suppose that $u_0 (x) \not
\equiv 0$ in $\Omega.$ Then from strong maximum principle and (\ref{1b}) we conclude that $u (x,t) > 0$ for $x \in
\overline\Omega, \, t > 0$ and, moreover, by Theorem~\ref{p:theorem:comp-prins} we have $u (x,t) \geq \min_{\overline\Omega} u (x,t_0) >0$ for $x \in \overline\Omega, \, t \geq t_0$ and any $t_0 >0.$

Suppose at first that (\ref{1.2}) holds. Let us introduce an
auxiliary function
\begin{equation*}\label{1.6}
w(t)  = \int_\Omega u(x,t) \, dx.
\end{equation*}
Integrating (\ref{1e}) over $\Omega$ and using Green's identity, Jensen's inequality and boundary condition (\ref{1b}), we have
\begin{eqnarray}\label{1.7}
w'(t) &=& \int_\Omega \left( \Delta u(x,t) + c(t) u^p (x,t) \right)  \, dx = k(t)
\int_{\partial \Omega} \int_0^t u^q (x,\tau) d\tau \, dS \nonumber \\
&+& c(t)  \int_\Omega u^p (x,t)  \, dx \geq |\Omega|^{1-p} c(t) w^p (t).
\end{eqnarray}
From (\ref{1.2}) and (\ref{1.7}) we obtain blow-up of all nontrivial solutions.

Suppose now that either (\ref{1.3}), (\ref{1.4}) or (\ref{1.3}), (\ref{1.5})  hold.
Let $G (x,y;t-\tau)$ be the Green function of the heat equation with homogeneous Neumann
boundary condition. We note that $G (x,y;t-\tau)$ has the
following properties (see, for example, \cite{Hu_Yin}):
\begin{equation}\label{1.81}
        G (x,y;t-\tau) \geq 0, \; x,y \in\Omega, \; 0 \leq \tau <t,
    \end{equation}
 \begin{equation}\label{1.82}
        \int_{\partial\Omega}{G (x,y;t-\tau)}\,dS_x \geq c_1,\; y \in\partial\Omega, \; 0 \leq \tau < t.
    \end{equation}
Here and subsequently by $c_i\,(i\in \mathbb N)$ we denote
positive constants. It is well known that
problem~(\ref{1e})--(\ref{1i})  is equivalent to the equation
   \begin{eqnarray}\label{1.9}
        u(x,t) & = & \int_\Omega G (x,y;t) u_0(y) \,dy + \int_0^t \int_\Omega G (x,y;t-\tau) c(\tau) u^p(y,\tau) \,dy \, d\tau \nonumber \\
        &+& \int_0^t \int_{\partial\Omega} G (x,y;t-\tau) k(\tau) \int_0^\tau  u^q(y,\sigma) \, d\sigma  \,dS_y \,d\tau.
    \end{eqnarray}
Integrating  (\ref{1.9}) over $\partial\Omega$ and applying
(\ref{1.81}), (\ref{1.82}) and Jensen's inequality,  we obtain
\begin{eqnarray}\label{1.10}
\int_{\partial\Omega}  u(x,t)  \,dS_x & \geq &
c_1 \int_0^t  k(\tau) \int_0^\tau  \int_{\partial\Omega} u^q(y,\sigma) \, dS_y \,  d\sigma \,d\tau \nonumber \\
&\geq& c_1 |\partial\Omega|^{1-q} \int_0^t  k(\tau)
\tau^{1-q} \left( \int_0^\tau  \int_{\partial\Omega} u (y,\sigma)
dS_y d\sigma \right)^q  d\tau. \nonumber \\
    \end{eqnarray}
Let us define
 \begin{equation*}\label{1.10a}
   f(t) = \int_0^t \int_{\partial\Omega}  u(x,\sigma)  \,dS_x d\sigma.
    \end{equation*}
Then from (\ref{1.10}) we have
 \begin{equation}\label{1.11}
   f'(t) \geq  c_2 \int_0^t \tau^{1-q}  k(\tau)  f^q (\tau) d\tau .
    \end{equation}
After integration of (\ref{1.11}) over $[0,t]$ we obtain
 \begin{equation*}\label{}
   f (t) \geq  c_2 \int_0^t (t - \tau) \tau^{1-q}  k(\tau)  f^q (\tau) d\tau .
    \end{equation*}
Now we denote
 \begin{equation*}\label{}
g(t)  = c_2 \int_0^t (t - \tau) \tau^{1-q}  k(\tau)  f^q (\tau) d\tau .
    \end{equation*}
Then
 \begin{equation}\label{1.12}
g''(t)  = c_2  t^{1-q}  k(t)  f^q (t)  \geq c_2  t^{1-q}  \underline k(t) g^q (t) .
    \end{equation}
Applying Theorem~\ref{Th0} to (\ref{1.12}), we complete the proof.
\end{proof}

To formulate global existence result for problem (\ref{1e})--(\ref{1i})  we suppose that
\begin{equation}\label{1.13}
\int_0^\infty \left(  c(t)  + t k(t) \right) \,dt < \infty
\end{equation}
and there exist positive constants $\alpha,\;t_0$ and $K$ such that $\alpha>t_0$ and
\begin{equation}\label{1.14}
    \int_{t-t_0}^t {\frac{\tau k(\tau)}{\sqrt{t-\tau}}} \, d\tau \leq  K \, \textrm{ for } \, t \geq \alpha.
\end{equation}

\begin{theorem}\label{Th2}
Let $\min (p,q) >1$ and  (\ref{1.13}),  (\ref{1.14}) hold.
Then problem  (\ref{1e})--(\ref{1i}) has bounded global solutions for small initial data.
\end{theorem}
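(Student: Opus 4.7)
I will establish a uniform \emph{a priori} $L^\infty$ bound on any classical solution starting from small initial data by exploiting the integral representation (\ref{1.9}) together with Green-function estimates; global existence then follows from the continuation principle inherent in the local theory. Let $u$ be the solution on its maximal existence interval $[0,T^*)$ with $\|u_0\|_\infty \leq \varepsilon$ (to be chosen small), and set $M(t):=\sup_{\overline{\Omega}\times[0,t]}u$. From (\ref{1.9}), the properties $G\geq 0$ and $\int_\Omega G(x,y;t)\,dy=1$, the standard boundary bound $\int_{\partial\Omega} G(x,y;t-\tau)\,dS_y \leq c_3/\sqrt{t-\tau}$, and the pointwise estimate $u\leq M(t)$, one obtains
\begin{equation*}
u(x,t) \leq \varepsilon + M(t)^p \int_0^\infty c(\tau)\,d\tau + c_3\, M(t)^q \int_0^t \frac{\tau k(\tau)}{\sqrt{t-\tau}}\,d\tau.
\end{equation*}

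The first of these integrals is finite by (\ref{1.13}). The heart of the proof is to show that $J(t):=\int_0^t \tau k(\tau)(t-\tau)^{-1/2}\,d\tau$ is bounded uniformly in $t\geq 0$. For $t\geq \alpha$ I split the integral at $\tau = t-t_0$: the tail $\int_{t-t_0}^t$ is controlled directly by hypothesis (\ref{1.14}), while on the head $\int_0^{t-t_0}$ one has $(t-\tau)^{-1/2}\leq t_0^{-1/2}$, so that piece is dominated by $t_0^{-1/2}\int_0^\infty \tau k(\tau)\,d\tau<\infty$, again by (\ref{1.13}). For $0\leq t<\alpha$ continuity of $k$ together with the integrability of $(t-\tau)^{-1/2}$ yields a crude bound depending only on $\alpha$. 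Consequently there exists $J_\infty<\infty$ with $J(t)\leq J_\infty$ for every $t\geq 0$.

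Combining these pieces gives $M(t) \leq \varepsilon + AM(t)^p + BM(t)^q$ on $[0,T^*)$ with constants $A,B$ independent of $\varepsilon$. Since $\min(p,q)>1$, for $\varepsilon$ sufficiently small one has $A(2\varepsilon)^p + B(2\varepsilon)^q < \varepsilon$, so the map $\Phi(s):=\varepsilon + As^p + Bs^q$ satisfies $\Phi(2\varepsilon)<2\varepsilon$. Because $M(\cdot)$ is continuous with $M(0)\leq \varepsilon<2\varepsilon$, a standard bootstrap argument rules out $M$ ever reaching the level $2\varepsilon$: at any hypothetical first crossing time $t_1$ one would have $2\varepsilon = M(t_1) \leq \Phi(M(t_1)) = \Phi(2\varepsilon)<2\varepsilon$, a contradiction. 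Hence $M(t)\leq 2\varepsilon$ throughout $[0,T^*)$, which via the blow-up alternative forces $T^*=\infty$ and produces a bounded global solution.

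The main obstacle I anticipate is invoking and using the Neumann heat-kernel estimate $\int_{\partial\Omega} G(x,y;t-\tau)\,dS_y \leq c_3/\sqrt{t-\tau}$ (of the kind cited from \cite{Hu_Yin}); once this singular-but-integrable bound is available, hypotheses (\ref{1.13})--(\ref{1.14}) are exactly what is needed to absorb the memory integral, and the rest of the argument reduces to careful tracking of constants followed by the elementary bootstrap.
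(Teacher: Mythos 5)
Your argument is correct in substance but takes a genuinely different route from the paper. The paper does not derive an a priori bound from the integral representation; instead it builds an explicit supersolution $\overline u(x,t)=\alpha z(t)y(x,t)$, where $y$ solves the auxiliary Neumann problem with boundary flux $tk(t)$ and satisfies $1\le y\le Y$ by Lemma 3.3 of \cite{GladkovKavitova2} (this is exactly where hypotheses (\ref{1.13}) and (\ref{1.14}) enter), while $z(t)=\left(1+(p-1)(\alpha Y)^{p-1}\int_t^\infty c(\tau)\,d\tau\right)^{-1/(p-1)}$ absorbs the source term $c(t)u^p$; the conclusion then follows from the comparison principle (Theorem~\ref{p:theorem:comp-prins}). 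Your Duhamel-plus-bootstrap scheme reaches the same conclusion and uses (\ref{1.13}) and (\ref{1.14}) in essentially the same way, namely to bound the boundary potential uniformly in $t$; its advantage is that it bypasses the comparison principle and the auxiliary lemma of \cite{GladkovKavitova2}, relying only on the representation formula (\ref{1.9}) and the continuation principle, whereas the paper's route is shorter once the auxiliary lemma is granted and yields an explicit smallness threshold for $u_0$. One point in your write-up needs repair: the estimate $\int_{\partial\Omega}G(x,y;s)\,dS_y\le c_3/\sqrt{s}$ cannot hold for large $s$, since the Neumann kernel tends to $1/|\Omega|$ and the left-hand side therefore tends to $|\partial\Omega|/|\Omega|>0$; the correct bound of the type proved in \cite{Hu_Yin} is $c_3\left(1+s^{-1/2}\right)$. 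This adds an extra term $c_3M(t)^q\int_0^t\tau k(\tau)\,d\tau$ to your estimate, which is finite uniformly in $t$ by (\ref{1.13}), so the bootstrap inequality $M(t)\le\varepsilon+AM(t)^p+BM(t)^q$ and the rest of your argument survive unchanged.
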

\begin{proof}
Let $y(x,t)$ be a solution of the following problem
\begin{equation*}\label{vsp}
\left\{
  \begin{array}{ll}
    y_t = \Delta y, \; x\in\Omega, \; t>0 \\
    \frac{\partial y(x,t)}{\partial \nu} = t k(t), \; x \in\partial\Omega, \; t>0, \\
    y(x,0)= 1,\; x\in\Omega.
  \end{array}
\right.
\end{equation*}
According to Lemma 3.3 of \cite{GladkovKavitova2} there exists a
positive constant $Y$ such that
\begin{equation*}\label{}
1 \leq y(x,t) \leq Y, \,  x\in\Omega, \; t>0.
\end{equation*}
Next, for any $T >0$ we construct a positive supersolution of
(\ref{1e})--(\ref{1i}) in $Q_T$ in such a form that
\begin{equation*}\label{}
\overline u (x,t) = \alpha z(t) y(x,t),
\end{equation*}
where $ \alpha >0$ and
\begin{equation*}\label{}
z (t) = \left( 1 + (p-1) (\alpha Y )^{p-1} \int_t^\infty c(\tau)
\, d\tau  \right)^{-\frac{1}{p-1}}.
\end{equation*}
It is easy to check that $z(t)$ is the solution of the equation
\begin{equation*}\label{}
z' (t) - (\alpha Y )^{p-1} c(t) z^p (t) = 0
\end{equation*}
and satisfies the inequality $ z (t) \leq 1.$ After simple
computations it follows that
\begin{eqnarray*}
        \overline u_t - \Delta \overline u - c(t) \overline u^p
& = & \alpha z' y + \alpha z y_t - \alpha z \Delta y - \alpha^p c(t) z^p y^p\\
        &\geq& \alpha y (z' - \alpha^{p-1} Y^{p-1} c (t) z^p) = 0, \;x\in\Omega, \; t>0,
    \end{eqnarray*}
and
    \begin{equation*}
\frac{\partial\overline u}{\partial\nu} - k(t) \int_0^t \overline
u^q (x,\tau) \, d\tau \geq \alpha t k(t) z(t) (1 - \alpha^{q-1}
Y^q) \geq 0, \; x\in\partial\Omega, \;t>0,
    \end{equation*}
if $\alpha \leq Y^{q/(q-1)}.$ Thus, by Theorem~\ref{p:theorem:comp-prins} there
exist bounded global solutions of (\ref{1e})--(\ref{1i}) for any initial data
satisfying the inequality
\begin{equation*}\label{}
u_0 (x) \leq  \alpha \left( 1 + (p-1) (\alpha Y )^{p-1}
\int_0^\infty c(\tau) \, d\tau  \right)^{-\frac{1}{p-1}}.
\end{equation*}
\end{proof}

 Let us introduce the following notations:
\begin{equation}\label{1.17}
 \ln_1 t= \ln t, \, \ln_{j+1} t=\ln(\ln_j t), \,
  l_j (t)= \prod_{i=1}^{j} \ln_i t, \,  l_{j,\gamma} (t) = l_j (t) \ln_j^{\gamma} t,   \, j \in \mathbb{N}, \, \gamma>0.
\end{equation}
\begin{remark}\label{Rem1}
 Arguing in the same way as in \cite{GladkovKavitova2} it is easy to show
that~(\ref{1.14}) is a necessary condition for the boundedness of
global solutions for~(\ref{1e})--(\ref{1i}).
It follows from  Theorem~\ref{Th1} and Theorem~\ref{Th2} that the condition  (\ref{1.2})  is optimal for blow-up in finite time of all nontrivial solutions of  (\ref{1e})--(\ref{1i}). Furthermore, from Theorem~\ref{p:theorem:comp-prins} and Theorem~\ref{Th1} we conclude that problem  (\ref{1e})--(\ref{1i}) has no nontrivial global solutions if $q > 1$ and
\begin{equation*}\label{}
k (t) \geq  \frac{c_3 }{t^2 l_j (t)} \,\,\,  \textrm{for} \,\,\, j \in \mathbb{N} \,\,\,  \textrm{and large values of} \,\,\, t.
\end{equation*}
On the other hand, from Theorem~\ref{Th2} we obtain
the existence of nontrivial bounded global solutions of (\ref{1e})--(\ref{1i}) if $\min (p,q) > 1,$
\begin{equation*}\label{}
\int_0^\infty c(t) \,dt < \infty \,\,\,  \textrm{and} \,\,\,
k (t) \leq  \frac{c_4 }{t^2 l_{j,\gamma} (t) } \,\,\,  \textrm{for} \,\,\, j \in \mathbb{N}, \gamma>0 \,\,\,  \textrm{and large values of} \,\,\, t.
\end{equation*}
\end{remark}

%%%%%%%%%%%%%%%%%%%%%%%%%%%%%%%%%%%%%%%%%%%%%%%%%%%%%%%%%%%%%%%%%%%%%%%%%%%%%%%%%%%%%%%%%%%%%%%%%%%%%%%%%%%%%%%%%%%%%%%%%%%%%%%%%%%%%%%%%%%%%%%%%%%%%%%%%%%

\section{Global existence and blow-up for $p=1, \, q > 1$ }\label{p=1, q>1}
In this section we obtain sufficient conditions for the
existence and nonexistence of global solutions of problem
(\ref{1e})--(\ref{1i})  for $p=1, q>1.$
\begin{theorem}\label{Th3} Let  $p=1, q>1.$Then there are not nontrivial global solutions of (\ref{1e})--(\ref{1i})
if $k(t)  \geq \underline k(t) \geq 0,$
\begin{equation}\label{2.2}
\int_0^\infty t^{1-q} \exp  \left( -\int_0^t c(s) \, ds \right)
\left( \int_0^t \exp  \left( \int_0^\tau c(s) \, ds \right) \,
d\tau  \right)^q  \underline k(t)   \,dt = \infty
\end{equation}
and at least one of the following conditions is fulfilled
\begin{equation}\label{2.3}
 t^{1-q} \exp \left( - 2 \int_0^t c(s)  ds \right)  \left( \int_0^t  \exp
 \left( \int_0^\tau c(s) \, ds  \right) \, d\tau  \right)^{q+1}  \underline k(t)   \leq  C,  \, C > 0,
\end{equation}
for large values of $t,$ or
\begin{equation}\label{2.4}
 t^{1-q}\exp \left( - 2\int_0^t c(s) \, ds \right)  \underline k(t) \,  \textrm{is nonincreasing for large values of} \,\,  t.
\end{equation}
\end{theorem}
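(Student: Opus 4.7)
The plan is to adapt the argument of Theorem~\ref{Th1}, first eliminating the linear interior reaction. Setting $E(t)=\exp\bigl(\int_0^t c(s)\,ds\bigr)$ and $U(x,t)=u(x,t)/E(t)$, a direct computation shows that $U$ satisfies
\begin{equation*}
U_t=\Delta U\ \text{in}\ \Omega,\qquad \frac{\partial U}{\partial\nu}=\frac{k(t)}{E(t)}\int_0^t E(\tau)^q U^q(x,\tau)\,d\tau\ \text{on}\ \partial\Omega,
\end{equation*}
with $U(\cdot,0)=u_0$. This puts us in the framework of Theorem~\ref{Th1} with an effective memory kernel weighted by the exponentials of $c$.

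Following the Green function argument from the proof of Theorem~\ref{Th1}, I would write the Duhamel representation of $U$ with the Neumann kernel $G$, integrate in $x$ over $\partial\Omega$, and invoke (\ref{1.81})--(\ref{1.82}) together with two applications of Jensen's inequality: once in space on $\int_{\partial\Omega}U^q(y,\sigma)\,dS_y$, and once in time on $\int_0^\tau(E(\sigma)W(\sigma))^q\,d\sigma$, where $W(\sigma)=\int_{\partial\Omega}U(y,\sigma)\,dS_y$. Since $E(\sigma)W(\sigma)=\int_{\partial\Omega}u(y,\sigma)\,dS_y$, setting
\begin{equation*}
f(t)=\int_0^t\int_{\partial\Omega}u(x,\sigma)\,dS_x\,d\sigma,
\end{equation*}
one arrives at the integral inequality
\begin{equation*}
\frac{f'(t)}{E(t)}\ge c_2\int_0^t \frac{k(\tau)}{E(\tau)}\,\tau^{1-q}\,f^q(\tau)\,d\tau,
\end{equation*}
with $f(t)>0$ for $t>0$ by the strong maximum principle applied to $U$.

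The decisive step is the change of independent variable $r=R(t):=\int_0^t E(s)\,ds$, for which $dr=E(t)\,dt$ and $r\to\infty$ as $t\to\infty$ (using $E\ge 1$). Setting $Y(r)=f(t(r))$ gives $Y'(r)=f'(t)/E(t)$, and integrating the above inequality once more in $r$ produces, exactly as in Theorem~\ref{Th1} but in the variable $r$, a function $Z(r)\le Y(r)$ satisfying
\begin{equation*}
Z''(r)\ge c_2\,t^{1-q}\,\underline k(t)\,E(t)^{-2}\,Z^q(r),\qquad t=t(r),
\end{equation*}
for $r$ large. Pulling the three hypotheses of Theorem~\ref{Th0} back through $dr=E(t)\,dt$, the divergence condition $\int r^q b(r)\,dr=\infty$ becomes precisely (\ref{2.2}), while the two alternative decay conditions on $b(r)$ translate into (\ref{2.3}) and (\ref{2.4}). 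Theorem~\ref{Th0} then rules out a global $Z$, contradicting the assumed existence of a nontrivial global $u$.

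The chief obstacle is recognising that the natural time variable for this problem is $r$, not $t$. If one integrates the above inequality directly in $t$, the second derivative of the resulting auxiliary function inherits an extra $c(t)E(t)$ term that must be discarded, which strips away the exponential weight and only recovers the coarser hypotheses of Theorem~\ref{Th1}. The substitution $r=R(t)$ keeps the factor $E(t)^{-2}$ intact, which is exactly the weight that produces the sharper conditions (\ref{2.2})--(\ref{2.4}).
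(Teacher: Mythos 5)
Your proposal is correct and follows essentially the same route as the paper: the substitution $u=Ev$ reducing to a pure heat equation, the Green's function representation with (\ref{1.81})--(\ref{1.82}) and Jensen, the same auxiliary function $f$, and the change of time variable $s=\int_0^t\exp\bigl(\int_0^\tau c\bigr)d\tau$ leading to the inequality (\ref{2.11}) to which Theorem~\ref{Th0} is applied. The only cosmetic difference is that you change variables before the second integration while the paper integrates first, defines $g$, and then substitutes; the resulting function and final differential inequality are identical.
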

\begin{proof}
We can suppose that $u_0 (x) \not \equiv 0,$  since otherwise $u
(x,t) \equiv 0.$ Let us change unknown function in the following
way
\begin{equation}\label{2.0}
u (x,t) = v (x,t) \exp \int_0^t c(\tau) \, d\tau.
\end{equation}
Then $ v (x,t) $ is a solution to the problem
\begin{eqnarray}\label{2.1}
\left\{
\begin{array}{ll}
v_{t} = \Delta v,  \, & x \in \Omega, \, t>0,   \\
\frac{\partial v(x,t)}{\partial\nu} = k(t) \int_0^t \exp
\left( q \int_0^\tau c(s) \, ds  - \int_0^t c(s) \, ds \right)  v^q (x,\tau) \,d\tau, \,  & x \in\partial\Omega, \, t > 0, \\
v(x,0)= u_0(x),  \, & \, x  \in \Omega.
\end{array}
\right.
\end{eqnarray}
It is well known that problem~(\ref{2.1}) is equivalent to the
equation
   \begin{eqnarray}\label{2.5}
       && v (x,t) =  \int_\Omega G (x,y;t) u_0 (y) \,dy \nonumber \\
        &+& \int_0^t \int_{\partial\Omega} G (x,y;t-\tau) k(\tau) \int_0^\tau
        \exp \left( q\int_0^\sigma c(s) ds - \int_0^\tau c(s) ds \right) v^q(y,\sigma) d\sigma dS_y d\tau. \nonumber \\
    \end{eqnarray}
Integrating  (\ref{2.5}) over $\partial\Omega$ and applying
(\ref{1.81}), (\ref{1.82}) and Jensen's inequality, we obtain
  \begin{eqnarray}\label{2.6}
%\hskip {-5 pt}
&&\int_{\partial\Omega}  v(x,t)  \,dS_x \nonumber \\
        &\geq & c_1 |\partial\Omega|^{1-q} \int_0^t \tau^{1-q} k(\tau) \exp  \left(
        -\int_0^\tau c(s)  ds \right) \left( \int_0^\tau \int_{\partial\Omega}
        \exp  \left( \int_0^\sigma c(s) ds \right) v(y,\sigma) dS_y d\sigma \right)^q d\tau. \nonumber \\
    \end{eqnarray}
We set
 \begin{equation}\label{2.7}
   f(t) = \int_0^t  \int_{\partial\Omega}  \exp  \left( \int_0^\sigma c(s) ds \right)  v(y,\sigma)  \,dS_y d\sigma.
    \end{equation}
Then from (\ref{2.6}) and (\ref{2.7}) we deduce that $f'(t) >0$ for
$t >0$ and
\begin{equation}\label{2.8}
   f'(t) \geq  c_5  \exp  \left( \int_0^t c(s)  ds \right) \int_0^t \tau^{1-q}  \exp  \left(
        -\int_0^\tau c(s)  ds \right) k(\tau)  f^q (\tau) d\tau .
    \end{equation}
After integration of (\ref{2.8}) over $[0,t]$ we obtain
 \begin{equation*}\label{}
   f (t) \geq  c_5 \int_0^t \exp  \left( \int_0^\sigma c(s)  ds \right) \int_0^\sigma \tau^{1-q}  \exp  \left(
        -\int_0^\tau c(s)  ds \right) k(\tau)  f^q (\tau) d\tau
        d\sigma.
    \end{equation*}
Defining
 \begin{equation*}\label{}
g(t)  = c_5 \int_0^t \exp  \left( \int_0^\sigma c(s)  ds \right)
\int_0^\sigma \tau^{1-q}  \exp  \left(
        -\int_0^\tau c(s)  ds \right) k(\tau)  f^q (\tau) d\tau
        d\sigma
    \end{equation*}
we have $f(t) \geq g(t).$ Moreover,
 \begin{equation}\label{2.9}
g''(t) \geq c(t) g' (t) + c_5  t^{1-q} \underline k(t) g^q (t).
    \end{equation}
Multiplying (\ref{2.9})  by $ \exp  \left( - \int_0^t c(s)  ds \right),$ we obtain
 \begin{equation}\label{2.10}
\left(  \exp  \left( - \int_0^t c(s)  ds \right) g'(t) \right)'
\geq c_5  t^{1-q} \exp  \left( - \int_0^t c(s)  ds \right)
\underline k(t)  g^q (t).
    \end{equation}
Let us change variable and unknown function in the following way
 \begin{equation*}\label{}
s =  \int_0^t \exp  \left( \int_0^\tau c(\sigma)  d\sigma \right) \, d\tau, \,\,\,
\phi (s) = g (t)
    \end{equation*}
and rewrite (\ref{2.10}) as
 \begin{equation}\label{2.11}
\phi'' (s)  \geq c_5 t^{1-q} \underline k(t) \exp \left( -2
\int_0^t c(\sigma)  d\sigma \right)  \phi^q (s).
    \end{equation}
Applying Theorem~\ref{Th0} to (\ref{2.11}), we complete the proof.
\end{proof}

\begin{theorem}\label{Th4} Let $p =1,\, q >1,$
\begin{equation}\label{2.12}
\int_0^\infty  k(t) \exp  \left( -\int_0^t c(s) \, ds \right)
\int_0^t \exp  \left( q \int_0^\tau c(s) \, ds \right) \, d\tau
\,dt < \infty
\end{equation}
and there exist positive constants $\alpha,\;t_0$ and $K$ such
that $\alpha > t_0$ and
\begin{equation}\label{2.13}
    \int_{t-t_0}^t {\frac{k(\tau) \exp  \left( -\int_0^\tau c(s) \, ds \right)
\int_0^\tau \exp \left( q \int_0^\sigma c(s) \, ds \right) \,
d\sigma \,d\tau }{\sqrt{t-\tau}}} \leq  K \, \textrm{ for } \, t
\geq \alpha.
\end{equation}
 Then problem  (\ref{1e})--(\ref{1i}) has global solutions for small initial data.
 If, in addition,
 \begin{equation}\label{2.14}
  \int_0^\infty c(t) \, dt < \infty
 \end{equation}
then problem  (\ref{1e})--(\ref{1i}) has bounded global solutions for small initial data.
\end{theorem}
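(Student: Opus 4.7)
The plan is to mirror the proof of Theorem~\ref{Th2} after first applying the change of unknown~(\ref{2.0}) from the proof of Theorem~\ref{Th3}. Setting $u(x,t) = v(x,t)\exp\int_0^t c(\tau)\,d\tau$ reduces~(\ref{1e})--(\ref{1i}) (with $p=1$) to the pure heat equation $v_t = \Delta v$ in $\Omega$ with $v(x,0) = u_0(x)$ and the memory boundary condition
\begin{equation*}
\frac{\partial v(x,t)}{\partial\nu} = k(t)\exp\left(-\int_0^t c(s)\,ds\right)\int_0^t \exp\left(q\int_0^\sigma c(s)\,ds\right) v^q(x,\sigma)\,d\sigma.
\end{equation*}
Every global solution $v$ of this transformed problem produces a global solution $u$ of the original problem, and under the extra assumption~(\ref{2.14}) the factor $\exp\int_0^t c(\tau)\,d\tau$ is bounded, so boundedness of $v$ transfers to boundedness of $u$.

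Next I would introduce the auxiliary function $y(x,t)$ solving $y_t = \Delta y$ in $\Omega$ with $\partial y/\partial\nu = b(t)$ on $\partial\Omega$ and $y(x,0) = 1$, where
\begin{equation*}
b(t) := k(t)\exp\left(-\int_0^t c(s)\,ds\right)\int_0^t \exp\left(q\int_0^\sigma c(s)\,ds\right) d\sigma.
\end{equation*}
Condition~(\ref{2.12}) says exactly that $\int_0^\infty b(t)\,dt < \infty$, while~(\ref{2.13}) is precisely the local bound $\int_{t-t_0}^t b(\tau)/\sqrt{t-\tau}\,d\tau \leq K$ for $t\geq\alpha$. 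These are the hypotheses of Lemma~3.3 of~\cite{GladkovKavitova2}, the same tool invoked in Theorem~\ref{Th2}, and it produces a constant $Y > 0$ such that $1 \leq y(x,t) \leq Y$ for all $x \in \overline\Omega,\,t \geq 0$.

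With $y$ in hand, I would take $\bar v(x,t) := \alpha\, y(x,t)$ as supersolution of the transformed problem, $\alpha>0$ small. Trivially $\bar v_t - \Delta\bar v = 0$, while on the boundary the bound $y \leq Y$ yields
\begin{equation*}
k(t)\int_0^t \exp\left(q\int_0^\sigma c(s)\,ds - \int_0^t c(s)\,ds\right)\bar v^q(x,\sigma)\,d\sigma \leq \alpha^q Y^q\, b(t) = \alpha^q Y^q\,\frac{\partial y}{\partial\nu},
\end{equation*}
so imposing $\alpha \leq Y^{-q/(q-1)}$ makes $\partial\bar v/\partial\nu$ dominate the memory flux. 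Since $\bar v(x,0) = \alpha$, Theorem~\ref{p:theorem:comp-prins} gives $v \leq \bar v \leq \alpha Y$ whenever $u_0(x) \leq \alpha$, proving global existence; under~(\ref{2.14}) the factor $\exp\int_0^t c(\tau)\,d\tau$ is bounded and this upgrades to a uniform bound on $u$. The only substantive point is matching~(\ref{2.12}) and~(\ref{2.13}) to the hypotheses of the cited lemma; once that is done the supersolution verification and the comparison step are routine.
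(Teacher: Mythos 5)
Your proposal is correct and follows essentially the same route as the paper: the paper's supersolution is $\overline u = \alpha \exp\left(\int_0^t c(s)\,ds\right) h(x,t)$ with $h$ solving exactly your auxiliary problem for $y$ (same flux $b(t)$, same appeal to Lemma 3.3 of \cite{GladkovKavitova2}, same smallness condition $\alpha \leq H^{-q/(q-1)}$), which is just your $\exp\left(\int_0^t c\right)\bar v$ written without performing the change of variables explicitly. The only cosmetic difference is that you verify the supersolution inequalities for the transformed problem (\ref{2.1}) and then transfer back, while the paper verifies them directly for (\ref{1e})--(\ref{1i}).
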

\begin{proof}
To prove the theorem we construct a positive supersolution of
(\ref{1e})--(\ref{1i}) in such a form that
\begin{equation*}\label{}
\overline u (x,t) = \alpha \exp  \left( \int_0^t c(s) \, ds
\right) h(x,t),
\end{equation*}
where $h(x,t)$ is a solution to the following problem
\begin{equation}\label{vsp2}
\left\{
  \begin{array}{ll}
    h_t = \Delta h, \; x\in\Omega, \; t>0, \\
    \frac{\partial h(x,t)}{\partial \nu} =  k(t) \exp  \left( -\int_0^t c(s) \, ds \right)
\int_0^t \exp  \left( q \int_0^\tau c(s) \, ds \right) \, d\tau
    , \; x \in\partial\Omega, \; t>0, \\
    h(x,0)= 1,\; x\in\Omega.
  \end{array}
\right.
\end{equation}
As it is proved in \cite{GladkovKavitova2} the solution of
(\ref{vsp2}) satisfies the inequalities
\begin{equation*}\label{}
1 \leq h(x,t) \leq H, \,  x\in\Omega, \; t>0
\end{equation*}
for some $H>0.$ It is easy to check that $\overline u (x,t)$ is
the supersolution of (\ref{1e})--(\ref{1i}) if $\alpha \leq H^{-q/(q-1)}$ and
$u_0 (x) \leq  \alpha.$ Moreover, $\overline u (x,t)$ is bounded function under the condition (\ref{2.14}).
\end{proof}

\begin{remark}\label{Rem2}
Let $p=1, q>1.$  Arguing in the same way as in \cite{GladkovKavitova2} it is easy to prove from (\ref{2.0}) and (\ref{2.1})
that both conditions (\ref{2.13}) and (\ref{2.14}) are necessary for the boundedness of
global solutions of~(\ref{1e})--(\ref{1i}). Furthermore, we conclude from Theorem~\ref{p:theorem:comp-prins},
Theorem~\ref{Th3} and Theorem~\ref{Th4}  that  problem (\ref{1e})--(\ref{1i})
has no nontrivial global solutions if
\begin{equation*}\label{}
c (t) \geq \frac{\beta }{t} \,\,\, \textrm{for large values of}
\,\,\, t \,\,\, \textrm{and some} \,\,\, \beta > 0
\end{equation*}
and
\begin{equation*}\label{}
k (t) \geq  \frac{c_6 }{t^{\beta (q-1) + 2} l_j (t)} \,\,\,
\textrm{for} \,\,\, j \in \mathbb{N} \,\,\,  \textrm{and large
values of} \,\,\, t
\end{equation*}
and problem (\ref{1e})--(\ref{1i}) has nontrivial bounded global solutions if
\begin{equation*}\label{}
c (t) \leq \frac{\omega }{t} \,\,\, \textrm{for large values of}
\,\,\, t \,\,\, \textrm{and some} \,\,\, \omega > 0
\end{equation*}
 and
\begin{equation*}\label{}
k (t) \leq  \frac{c_7}{t^{\omega (q-1) + 2} l_{j,\gamma} (t)}
\,\,\, \textrm{for} \,\,\, j \in \mathbb{N}, \gamma >0 \,\,\,
\textrm{and large values of} \,\,\, t,
\end{equation*}
where $l_j (t)$ and $l_{j,\gamma} (t)$ were introduced in (\ref{1.17}).
\end{remark}

\section*{Acknowledgement}

The first author is grateful to the University of Picardie Jules Verne,  since a part of this work was done while he enjoyed the hospitality of this  university.

\section*{Funding}

The first author was supported by the "RUDN University Program 5-100" and
the state program of fundamental research of Belarus (grant 1.2.03.1). The second author was supported by  DAI-UPJV F-Amiens.


\begin{thebibliography}{9999}

  \bibitem{C}  Ciarletta M. A differential problem for heat equation with a boundary condition with memory. Appl. Math. Letters. 1997;10:95--101.

 \bibitem{FM1} Fabrizio M, Morro A. Mathematical problems in linear viscoelasticity. Vol. 12, SIAM studies in applied mathematics. Philadelphia (PA): SIAM; 1992.

\bibitem{GP} Gurtin ME, Pipkin AP. A general theory of heat conduction with finite wave speeds. Arch. Rational Mech. Anal. 1968;31:113--126.

 \bibitem{FM2} Fabrizio M, Morro A.  A boundary condition with memory in electromagnetism, Arch. Rational Mech. Anal.
 1996;136:359--381.

\bibitem{LPSN-H} Levine HA, Pamuk S, Sleeman B, Nilsen-Hamilton M. Mathematical modeling of capillary
formation and development in tumor angiogenesis: penetration into the stroma. Bull. Math. Biol. 2001; 63: 801--863.

\bibitem{A} Anderson J. A fast diﬀusion model with memory at the boundary: global
solvability in the critical case. Appl. Anal. 2017; 96(5):771--777.

\bibitem{AD} Anderson J, Deng K. Global solvability for the porous medium equation with boundary flux governed by nonlinear memory. J. Math. Anal. Appl. 2015;423:1183--1202.

\bibitem{ADD} Anderson J, Deng K, Dong Z. Global solvability for the heat equation with boundary flux governed by nonlinear memory. Quart. Appl. Math. 2011;69:759--770.

\bibitem{ADW} Anderson J, Deng K, Wang Q. Global behavior of solutions to the fast diﬀusion equation with boundary boundary flux governed by memory. Math. Methods Appl. Sci. 2016;39(15):4451--4462.

\bibitem{DD} Deng K, Dong Z.  Blow-up for the heat equation with a general memory boundary condition. Commun. Pure App. Anal.  2012;11:2147--2156.

\bibitem{DW1} Deng K, Wang Q.  Blow-up rate for the heat equation with a memory boundary condition. Appl. Anal. 2015;94(2): 308--317.

\bibitem{DW2} Deng K, Wang Q. Global existence and blow-up for the fast diffusion equation with a memory boundary condition. Q. Appl. Math. 2016;74(1): 189--199.

\bibitem{LQF} Li C, Qiu L, Fang ZB. General decay rate estimates for a semilinear parabolic equation with memory term and mixed boundary condition. Bound. Value Probl. 2014;197:11p.

\bibitem{WCH} Wang Y, Chen J, He C. Singularity analysis for a semilinear
integro-differential equation with nonlinear memory boundary. J. Inequal. Appl. 2014;472:17p.


\bibitem{GladkovKavitova1} Gladkov A, Kavitova T.
 Initial boundary value problem for a semilinear parabolic equation with nonlinear nonlocal boundary conditions. Ukrain. Math. J. 2016;68(2):179--192.

\bibitem{GS} Gladkov A, Slepchenkov N. On proper and entire solutions
  of a generalized Emden--Fowler equation. Differ. Equat. 2005;41(2):173--183.

\bibitem{Hu_Yin} Hu B, Yin HM. Critical exponents for a system of heat equations coupled in a non-linear boundary condition. Math. Meth. Appl. Sci. 1996;19(14):1099--1120.

\bibitem{GladkovKavitova2} Gladkov A, Kavitova T.
Blow-up problem for semilinear heat equation with nonlinear
nonlocal boundary condition. Appl. Anal. 2016;95(9):1974--1988.

\end{thebibliography}
\end{document}